\newcommand{\R}{{\mathbb R}}
\newcommand{\Q}{{\mathbb Q}}
\newcommand{\C}{{\mathbb C}}
\newcommand{\N}{{\mathbb N}}
\newcommand{\Z}{{\mathbb Z}}
\def\F {F^{(k)}}
\theoremstyle{plain}
\numberwithin{equation}{section}
\newtheorem{thm}{Theorem}[section]
\newtheorem{lemma}[thm]{Lemma}
\newtheorem{example}[thm]{Example}
\newtheorem{remark}[thm]{Remark}
\newtheorem{proposition}[thm]{Proposition}
\newtheorem{cor}[thm]{Corollary}
\begin{document}

\title[Fibonacci Numbers as Sums of Consecutive $k$-Generalized Fibonacci Numbers]{Fibonacci Numbers as Sums of Consecutive Terms in $k$-Generalized Fibonacci Sequence}

\author[R. Alvarenga]{Roberto Alvarenga}
\address{São Paulo State University (UNESP)\\
               São José do Rio Preto, Brazil.}
\email{roberto.alvarenga@unesp.br}

\author[A. P. Chaves]{Ana Paula Chaves}
\address{Federal University of Goáis (UFG)\\
               Goiânia, Brazil.}
\email{apchaves@ufg.br}

\author[M. E. Ramos]{Maria Eduarda Ramos}
\address{Federal University of Minas Gerais (UFMG)\\
               Belo Horizonte, Brazil.}
\email{madu-ramos@ufmg.br}

\author[M.  Silva]{Matheus  Silva}
\address{University of São Paulo (ICMC-USP), São Carlos, Brazil.}
\email{matheussilva1@usp.br}

\author[M. Sosa]{Marcos Sosa}
\address{Federal University of Latin-America Integration (UNILA)\\
                Foz do Iguaçu,  Brazil.}
\email{mes.garcete.2020@aluno.unila.edu.br}

\thanks{}

\begin{abstract} Let $(\F_n)_{n\geq -(k-2)}$ be the \(k\)-generalized Fibonacci sequence, defined as the linear recurrence sequence whose first \(k\) terms are \(0, 0,  \ldots, 0, 1\), and whose subsequent terms are determined by the sum of the preceding \(k\) terms. 
This article is devoted to investigating when the sum of consecutive numbers in the $k$-generalized Fibonacci sequence belongs to the Fibonacci sequence  \((F_n)_{n}\). Namely, given $d,k \in \N$, with $k \geq 3$,  our main theorem states that there are at most finitely many $n \in \N$ such that $F_n^{(k)} + \cdots + F_{n+d}^{(k)}$  
is a Fibonacci number. In particular, the intersection between the Fibonacci sequence and the $k$-generalized Fibonacci sequence is finite.
\end{abstract}

\maketitle

\section{Introduction}

Let \((F_n)_{n}\) be the well-known Fibonacci sequence defined by \(F_{n+2} = F_{n+1} + F_n\) for \(n \geq 0\), with initial conditions \(F_0 = 0\) and \(F_1 = 1\). Fibonacci numbers are famous for their fascinating properties and remarkable connections to various fields, such as nature, architecture, engineering, technology, computing, and more. For an overview of the Fibonacci sequence's history, properties, and intriguing applications and generalizations, see \cite{kalman-03, koshy-01, posamentier-07, vorobiev-02}. 

Among several of the mentioned generalizations, we consider the \emph{$k$-generalized Fibonacci sequence}, also known as \emph{$k$-step Fibonacci sequence} or \emph{$k$-bonacci sequence}, as introduced by Miles in \cite{miles-60}. For $k \in \N$ and $k \geq 2$, it is the sequence of natural numbers  $(\F_n)_{n\geq -(k-2)}$, given by 
\[ \F_{n+k} := \F_{n+k-1}+ \F_{n+k-2}+\cdots + \F_{n} \]
for $n \geq 0$, with  the initial conditions $\F_{-k+2} :=\F_{-k+1} := \cdots := \F_0=0$ and $\F_1 :=1$. We observe that for $k=2$, we retrieve the Fibonacci numbers, i.e.,\ $F_n^{(2)}=F_n$. For $k=3$, $F_n^{(3)}$ are known as the \emph{Tribonacci numbers} (see A000073 in the OEIS), and for $k=4$, $F_n^{(4)}$ are called the \emph{Tetranacci numbers} (see A000078 in OEIS). These $k$-generalized Fibonacci sequences have been widely investigated in the literature over the last three decades, particularly concerning Diophantine equations (see e.g., \cite{wolfram-98, chaves-marques-15, gomez-gomez-luca-24}). According to \cite{kessler-04}, these numbers appear in probability theory and certain sorting algorithms. In a recent paper, Alan and Altassan \cite{altassan-alan-23} introduced $k$-generalized \emph{tiny golden angles} by using the roots of the characteristic polynomial of the $k$-generalized Fibonacci sequence. The authors assert that these angles could improve the accuracy of MRI scans by optimizing the distribution of sampling points, leading to clearer images.

This work aims to investigate when the sum of consecutive 
$k$-generalized Fibonacci numbers equals a Fibonacci number. Namely, given $k,d \in \N$, with $k \geq 2$, we examine the existence of $m,n \in \N$ such that 
\begin{equation} \label{eq1}
     F_n^{(k)} + \cdots + F_{n+d}^{(k)} = F_{m}.
\end{equation}
The case $k=2$ is completely described in Proposition \ref{prop=casek=2}.  In \cite{marques-13}, the author address the case $d=0$, proving that
\[  (F_m)_m \cap (F_{n}^{(k)})_{n \geq -(k-2)} = 
\begin{cases}
    \{0,1,2,13\} & \text{ if } k=3, \\
     \{0,1,2,8\} & \text{ if } k>3. \\
\end{cases}
\]
For $d>0$, there are many examples where \eqref{eq1} holds. The first nontrivial case,  $k=3$ and $d=1$ yields the following instances 
\[ F_{-1}^{(3)} + F_{0}^{(3)} = F_0, \; 
F_{0}^{(3)} + F_{1}^{(3)} = F_1, \;
F_{0}^{(3)} + F_{1}^{(3)} = F_2, \;
F_{1}^{(3)} + F_{2}^{(3)} = F_3, \;
F_{2}^{(3)} + F_{3}^{(3)} = F_4.\]
In Example \ref{ex-final}, we verify that the possibilities for \eqref{eq1} in this case, namely for $k=3$ and $d=1$, are indeed finite. Moreover, we conjecture that the above are all the occurrences.  Others examples are: 
\[F_{1}^{(3)} + F_{2}^{(3)} + F_{3}^{(3)} + F_{4}^{(3)}  = F_6 \quad \text{ and } \quad    
F_{5}^{(5)} + F_{6}^{(5)} + F_{7}^{(5)} = F_{10}.\]
In the main theorem of this article, Theorem \ref{thm-main}, we show that for fixed $d, k\in \N$, with $k\geq 3$, equation \eqref{eq1} has at most finitely many solutions.

\section{Preliminaries}

In this section, we set up the notation and evoke basic facts that shall be applied henceforth. Moreover, we state and prove the lemmas needed to exhibit our main theorem. 

Recall the Binet formula for Fibonacci numbers:
\[ F_n = \frac{\alpha^n - \beta^n}{\sqrt{5}}, \quad \forall n \in \N, \ \mbox{ where } \ \alpha = \frac{1+ \sqrt{5}}{2} \ \mbox{ and } \ \beta = \frac{1- \sqrt{5}}{2}\]
Similarly, the $k$-generalized Fibonacci sequence also admits a Binet-type formula, which we introduce in the following. 
 
 Let 
\[  f_k(T) := T^k - T^{k-1} - \ldots - T - 1 \in \Z[T],\]
be the characteristic polynomial of $(F_{n}^{(k)})_n$.  It is well known that $f_k(T)$ is an irreducible polynomial over $\Q[T]$ with simple roots, cf.\ either \cite{miles-60} or \cite[Cor. 3.4 and 3.8]{wolfram-98}. Moreover, its roots $\alpha_1, \ldots, \alpha_k \in \C$, can be ordered such that
\[  3^{-k} < |\alpha_k| \leq \cdots  \leq |\alpha_2| < 1 < |\alpha_1|   \]
where actually $\alpha_1 \in \R$ and $ 2(1-2^{-k}) < \alpha_1 < 2$, cf.\  either \cite{miles-60} or \cite[Lemma 3.6]{wolfram-98}. The root $\alpha_1 \in \R$ is called the \emph{dominant root} of either $f_k(T)$ or $(F_{n}^{(k)})_n$. 
 
The mentioned Binet-type formula for the general term $F_{n}^{(k)}$ of the $k$-generalized Fibonacci sequence is given by the following proposition. 

\begin{proposition} \label{prop-binetfomulas} Let $k \in \N$,  $k\geq 2$, and $F_{n}^{(k)}$ be the $n$-th term in the $k$-generalized Fibonacci sequence. Then
    \[ F_{n}^{(k)} = \sum_{i=1}^{k} \frac{\alpha_i -1}{2+(k+1)(\alpha_i-2)} \alpha_{i}^{n-1}\]
where  $\alpha_1, \ldots, \alpha_k \in \C$ are the roots of the characteristic polynomial of  $(F_{n}^{(k)})_n$.   
\end{proposition}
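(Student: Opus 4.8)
The plan is to start from the standard theory of linear recurrences and then pin down the coefficients by a short computation with $f_k$. Since $f_k(T)$ has $k$ simple roots $\alpha_1,\dots,\alpha_k$, the sequences $(\alpha_i^{\,n})_n$, $1\le i\le k$, form a basis of the $\C$-vector space of solutions of the recurrence $x_{n+k}=x_{n+k-1}+\cdots+x_n$; in particular, there are unique constants $c_1,\dots,c_k\in\C$ with $F_{n}^{(k)}=\sum_{i=1}^k c_i\,\alpha_i^{\,n-1}$ for every $n\ge-(k-2)$ (solvability being guaranteed by the Vandermonde determinant in the $\alpha_i$). Thus the whole statement reduces to computing the $c_i$, namely to showing $c_i=(\alpha_i-1)\big/\bigl(2+(k+1)(\alpha_i-2)\bigr)$.

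The second step is a purely algebraic identity rewriting this candidate value through $f_k'$. Multiplying by $T-1$ gives $(T-1)f_k(T)=T^{k+1}-2T^k+1$; differentiating and evaluating at a root $\alpha_i$ of $f_k$ (note $\alpha_i\ne1$, since $f_k(1)=1-k\ne0$ for $k\ge2$) yields $(\alpha_i-1)f_k'(\alpha_i)=(k+1)\alpha_i^{\,k}-2k\,\alpha_i^{\,k-1}=\alpha_i^{\,k-1}\bigl(2+(k+1)(\alpha_i-2)\bigr)$. Hence
\[
\frac{\alpha_i-1}{2+(k+1)(\alpha_i-2)}=\frac{\alpha_i^{\,k-1}}{f_k'(\alpha_i)},
\]
and it suffices to prove $c_i=\alpha_i^{\,k-1}/f_k'(\alpha_i)$, i.e.\ that $F_{n}^{(k)}=\sum_{i=1}^k\alpha_i^{\,n+k-2}/f_k'(\alpha_i)$.

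For the last step I would verify this identity directly. Writing $G_n$ for the right-hand side, $G_n$ satisfies the same recurrence as $(F_{n}^{(k)})_n$ because each $\alpha_i$ is a root of $f_k$, so it is enough to check $G_n=F_{n}^{(k)}$ at the $k$ consecutive indices $n=-(k-2),\dots,1$; concretely, that $\sum_{i=1}^k\alpha_i^{\,m}/f_k'(\alpha_i)=0$ for $0\le m\le k-2$ and $\sum_{i=1}^k\alpha_i^{\,k-1}/f_k'(\alpha_i)=1$. These are the classical power-sum identities for a monic polynomial with simple roots, which follow from the partial-fraction expansion of $z^{m}/f_k(z)$ (equivalently, by summing its residues and comparing with its behaviour at $\infty$). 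Alternatively, one can package everything through the generating function $\sum_{n\ge1}F_{n}^{(k)}x^{n}=x/(1-x-\cdots-x^{k})=x^{1-k}/f_k(1/x)$ and read off the $c_i$ from its partial-fraction decomposition about the poles $x=1/\alpha_i$.

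The routine parts are the basis statement and the differentiation identity; the only point that needs genuine care is the verification at the $k$ initial indices — in particular, correctly incorporating the ``negative'' initial conditions $F_{-(k-2)}^{(k)}=\cdots=F_0^{(k)}=0$ together with $F_1^{(k)}=1$, and handling the two boundary cases $m=k-2$ and $m=k-1$ in the power-sum identities. That is where I would slow down and write the residue (or partial-fraction) computation in full.
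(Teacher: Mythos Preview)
Your argument is correct. The factorisation $(T-1)f_k(T)=T^{k+1}-2T^k+1$, the identification $(\alpha_i-1)/\bigl(2+(k+1)(\alpha_i-2)\bigr)=\alpha_i^{k-1}/f_k'(\alpha_i)$, and the residue/partial-fraction verification of the initial values $\sum_i\alpha_i^{m}/f_k'(\alpha_i)=\delta_{m,k-1}$ for $0\le m\le k-1$ are all sound, and together they pin down the coefficients uniquely.

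As for comparison: the paper does not actually prove this proposition. Its entire proof reads ``See \cite[Thm.~1]{dresden-du-14}'', i.e.\ it simply imports the formula from Dresden and Du. So you have supplied a genuine, self-contained derivation where the paper only gives a citation. Your route---rewriting the Binet coefficient as $\alpha_i^{k-1}/f_k'(\alpha_i)$ and then invoking the classical power-sum identities---is essentially the standard one and is in the spirit of what Dresden--Du do; the advantage of writing it out is that the identity $(\alpha_i-1)f_k'(\alpha_i)=\alpha_i^{k-1}\bigl(2+(k+1)(\alpha_i-2)\bigr)$ becomes visible, and this is exactly the relation the paper later exploits (implicitly) when it works with $(k+1)\alpha_1-2k$ in Lemmas~\ref{lemma-1} and~\ref{lemma-main}.
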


\begin{proof}
   See \cite[Thm. 1]{dresden-du-14}.  
\end{proof}
    
\begin{remark} If $k=2$ in the above proposition, we obtain precisely the Binet formula for Fibonacci numbers. Moreover, there are others identities representing $F_{n}^{(k)}$ in terms of the roots of $f_k(T)$, see e.g., \ \cite[(2)'' in p. 749]{miles-60} and \cite{lee-01}.
\end{remark}

Next, we present the auxiliary results needed to prove our main theorem, listed in the order in which they appear in the text.


\begin{lemma} \label{lemma-1}
    Let $ k\in \N$, $k\geq 2$, and $\alpha_1 \in \R$ as  previously introduced. Then 
\[ N((k+1)\alpha_1-2k)= (-1)^{k} \Big( (2k)^{k-1}(k-1) - \sum_{j=0}^{k-2}(k+1)^{k-j}(2k)^j \Big),\]
where $N$ stands for the norm map of $\Q(\alpha_1)/\Q.$
\end{lemma}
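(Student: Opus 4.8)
The plan is to compute the norm $N((k+1)\alpha_1 - 2k) = \prod_{i=1}^{k}\big((k+1)\alpha_i - 2k\big)$ directly from the factorization of the characteristic polynomial. Since $\alpha_1,\dots,\alpha_k$ are exactly the roots of $f_k(T) = T^k - T^{k-1} - \cdots - T - 1$, and $f_k$ is monic, we have $f_k(T) = \prod_{i=1}^k (T-\alpha_i)$. The key observation is that $N\big((k+1)\alpha_1 - 2k\big)$ is a product over the full Galois orbit (here all conjugates, since $f_k$ is irreducible over $\Q$ and has degree $k$), so

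First I would write
\[
\prod_{i=1}^{k}\big((k+1)\alpha_i - 2k\big) = (k+1)^k \prod_{i=1}^{k}\Big(\alpha_i - \tfrac{2k}{k+1}\Big) = (k+1)^k \cdot (-1)^k \prod_{i=1}^{k}\Big(\tfrac{2k}{k+1} - \alpha_i\Big) = (-1)^k (k+1)^k f_k\!\Big(\tfrac{2k}{k+1}\Big).
\]
So the whole computation reduces to evaluating $f_k$ at the rational point $T = 2k/(k+1)$ and clearing denominators.

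Next I would compute $f_k(2k/(k+1))$ explicitly. It is cleanest to use the closed form $f_k(T) = \dfrac{T^{k+1} - 2T^k + 1}{T-1}$, valid for $T \neq 1$ (which follows from $(T-1)f_k(T) = T^{k+1} - 2T^k + 1$, a standard identity obtained from $(T-1)(T^k - T^{k-1} - \cdots - 1) = T^{k+1} - 2T^k + 1$). Plugging in $T = 2k/(k+1)$: the denominator is $T - 1 = (k-1)/(k+1)$, and the numerator is
\[
\Big(\tfrac{2k}{k+1}\Big)^{k+1} - 2\Big(\tfrac{2k}{k+1}\Big)^{k} + 1 = \frac{(2k)^{k+1} - 2(2k)^k(k+1) + (k+1)^{k+1}}{(k+1)^{k+1}} = \frac{(k+1)^{k+1} - (2k)^k(k-1) \cdot (-1) \cdot(-1)}{(k+1)^{k+1}},
\]
wait — more carefully, $(2k)^{k+1} - 2(k+1)(2k)^k = (2k)^k\big(2k - 2k - 2\big) = -2(2k)^k$, so the numerator equals $\big((k+1)^{k+1} - 2(2k)^k\big)/(k+1)^{k+1}$. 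Hence $f_k(2k/(k+1)) = \dfrac{(k+1)^{k+1} - 2(2k)^k}{(k+1)^{k+1}} \cdot \dfrac{k+1}{k-1} = \dfrac{(k+1)^{k+1} - 2(2k)^k}{(k+1)^k (k-1)}$, and therefore
\[
N\big((k+1)\alpha_1 - 2k\big) = (-1)^k (k+1)^k \cdot \frac{(k+1)^{k+1} - 2(2k)^k}{(k+1)^k(k-1)} = (-1)^k\,\frac{(k+1)^{k+1} - 2(2k)^k}{k-1}.
\]

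Finally I would reconcile this compact form with the claimed expression $(-1)^k\big((2k)^{k-1}(k-1) - \sum_{j=0}^{k-2}(k+1)^{k-j}(2k)^j\big)$ — this is now a pure polynomial identity in $k$, which I would verify by showing both sides, multiplied by $(k-1)$, agree: one checks that $(k-1)\sum_{j=0}^{k-2}(k+1)^{k-j}(2k)^j = (k+1)^{k+1} - (k+1)^2(2k)^{k-1} \cdot \text{(telescoping correction)}$ via the standard geometric-sum manipulation $(x-y)\sum_{j=0}^{k-2}x^{k-1-j}y^j$ with $x = k+1$, $y = 2k$, after factoring out an appropriate power, and that the leftover terms combine to $(2k)^{k-1}(k-1)^2 - \big((k+1)^{k+1} - 2(2k)^k\big)$ with the right sign. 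The only genuine obstacle is bookkeeping: matching the telescoping sum to the two-term numerator $(k+1)^{k+1} - 2(2k)^k$ requires care with the boundary terms $j=0$ and $j=k-2$ and with the factor $k-1$ appearing both as a summand coefficient and as the denominator. No deep input is needed beyond irreducibility of $f_k$ (already cited) and the elementary identity $(T-1)f_k(T) = T^{k+1} - 2T^k + 1$.
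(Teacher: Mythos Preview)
Your approach is correct and genuinely different from the paper's. The paper computes the norm as the determinant of the matrix of multiplication by $(k+1)\alpha_1-2k$ on $\Q(\alpha_1)$ in the power basis $\{1,\alpha_1,\ldots,\alpha_1^{k-1}\}$, evaluating that determinant by Laplace expansion along the first row and an induction on the size of the resulting minors. Your route---writing the norm as the product over conjugates and recognizing it as $(-1)^k(k+1)^k f_k\big(\tfrac{2k}{k+1}\big)$, then exploiting $(T-1)f_k(T)=T^{k+1}-2T^k+1$---is shorter and more conceptual, and it lands directly on the compact form $(-1)^k\dfrac{(k+1)^{k+1}-2(2k)^k}{k-1}$, which the paper only reaches later (in the proof of Lemma~\ref{lemma-main}) by applying the geometric-sum formula to its determinant answer. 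Your final reconciliation paragraph is sound in principle but written confusingly; the clean verification is: with $a=k+1$, $b=2k$ one has $\sum_{j=0}^{k-2}a^{k-j}b^j=a^2(b^{k-1}-a^{k-1})/(b-a)$, and after substituting and multiplying through by $k-1$ the desired identity reduces to $(2k)^{k-1}\big((k-1)^2-(k+1)^2\big)=-2(2k)^k$, i.e.\ $-4k(2k)^{k-1}=-2(2k)^k$, which is immediate.
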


\begin{proof} Let $A$ be the matrix, with respect to the basis $\{1, \alpha_1, \ldots \alpha_{1}^{k-1}\}$, of the $\Q$-linear transformation of $\Q(\alpha_1)$ given by the multiplication by $(k+1)\alpha_1-2k$. A straightforward calculation shows that

\[ A=\begin{pmatrix}
-2k  & 0 &  \cdots &  0 & k+1 \\
k+1 & -2k &  \cdots  &  0 & k+1 \\
0 & k+1 &  \cdots & 0  & k+1 \\
0 & 0 & \cdots & 0 &  k+1 \\
\vdots & \vdots & \vdots &  \vdots & \vdots \\
0 &  0 & \cdots & 0 &   k+1 \\
0 & 0 & \cdots & -2k & k+1 \\
0 & 0  &  \cdots & k+1 & -k+1 \\
\end{pmatrix}.\]
For $i=1, \ldots, k$, let  $A_i$ be the $i \times i$ matrix obtained from $A$ by removing the first $k-i$ rows and columns. In particular, $A_k =A$, $A_{2} = \begin{pmatrix}
    -2k & k+1 \\
    k+1 & -k+1
\end{pmatrix}$ and $A_1 = -k+1$. The determinant of $A_i$ can be calculated, for instance, through the Laplace expansion along its first row, which yields
\[\det(A_i)=  -2k \det(A_{i-1}) + (-1)^{i+1}(k+1)^{i}.\]
Let $\delta_i :=\det (A_i)$. Then
\[\delta_i =(-1)^{i+1}(k+1)^i - 2k \delta_{i-1}\]
where $\delta_0 := 1$ and $i=1, \ldots, k$.
We prove by induction that
\[ \delta_i= (-1)^{i} \Big( (2k)^{i-1}(k-1) - \sum_{j=0}^{i-2}(k+1)^{i-j}(2k)^j \Big) \]
for every $i=2, \ldots, k$. Indeed, it follows from the definition of $A_2$ that 
\[ \delta_2 = 2k(k-1) - (k+1)^2.\] 
Moreover, 
\begin{align*}
    \delta_{i+1} & = (-1)^{i}(k+1)^{i+1}- 2k \delta_i \\
    & = (-1)^{i}\Big((k+1)^{i+1} - (2k)^{i}(k-1) + \sum_{j=0}^{i-2}(k+1)^{i-j}(2k)^{j+1} \Big) \\
    & = (-1)^{i+1} \Big( (2k)^{i}(k-1) - \sum_{j=0}^{i-1}(k+1)^{i+1-j}(2k)^j \Big).
\end{align*}
Since $N((k+1)\alpha_1-2k)= \det(A)$, the lemma holds by replacing $i=k$ above. 
\end{proof}


Let $p \in \Z$ be a prime number. For $n \in \Q $, we use the standard notation $v_p(n)$ to represent its $p$-adic valuation. The following lemma will be helpful for proving the main theorem, as it will show that a certain linear form in logarithms is nonzero, allowing us to apply Baker's method.

\begin{lemma} \label{lemma-main} Let $d, k\in \N$, with $k\geq 3$, and $\alpha, \alpha_1 \in \R$ as  previously
introduced. Then 
\[ \frac{\alpha_1^{d+1} - 1 }{(k+1)\alpha_1-2k} \alpha_{1}^{n-1} 
- \frac{\alpha^m}{\sqrt{5}} \]
 is nonzero for all $m,n \in \N.$    
\end{lemma}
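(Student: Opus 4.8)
The natural strategy is to argue by contradiction: suppose that for some $m, n \in \N$ we have
\[
\frac{\alpha_1^{d+1} - 1}{(k+1)\alpha_1 - 2k}\,\alpha_1^{n-1} = \frac{\alpha^m}{\sqrt 5}.
\]
The right-hand side lies in the quadratic field $\Q(\sqrt 5) = \Q(\alpha)$, so the left-hand side must lie there as well. On the other hand, the left-hand side is visibly an element of $\Q(\alpha_1)$, which has degree $k$ over $\Q$, since $f_k(T)$ is irreducible of degree $k$. So the first step is to exploit the field-theoretic tension: $\Q(\alpha_1) \cap \Q(\alpha)$ is a subfield of $\Q(\alpha_1)$, hence has degree dividing $k$, and it is also a subfield of the quadratic field $\Q(\alpha)$, hence has degree $1$ or $2$. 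For $k \geq 3$ with $k$ odd the only common subfield is $\Q$, which already forces the left-hand side to be rational; for general $k \geq 3$ one observes that $\Q(\alpha_1)$ does not contain $\sqrt 5$ (e.g. because $f_k$ is irreducible and its splitting field does not contain a quadratic subfield generated by $\sqrt5$ — this can be checked via the discriminant of $f_k$, or more cheaply by a valuation argument below), so in all cases we may conclude that $\alpha^m/\sqrt 5 \in \Q$, i.e. that $\alpha_1^{d+1} - 1$ and $(k+1)\alpha_1 - 2k$ conspire to make the left-hand side rational.

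Assuming now that the common value is a rational number $r$, I would rewrite the identity as
\[
\alpha_1^{d+1} - 1 = r\,\bigl((k+1)\alpha_1 - 2k\bigr)\,\alpha_1^{-(n-1)},
\]
and apply the norm map $N = N_{\Q(\alpha_1)/\Q}$ to both sides. Since $N(\alpha_1) = (-1)^{k+1}\cdot(-1) = (-1)^k$ (the constant term of $f_k$ is $-1$, up to sign), the factor $N(\alpha_1^{-(n-1)}) = \pm 1$, and $N(r) = r^k$, we obtain
\[
N(\alpha_1^{d+1} - 1) = \pm\, r^k\, N\bigl((k+1)\alpha_1 - 2k\bigr),
\]
where the right-hand norm is computed explicitly by Lemma~\ref{lemma-1}. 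The plan is then to extract a contradiction from a $p$-adic valuation (for a suitable small prime $p$, the most promising being $p=2$ or a prime dividing $2k$ or $k+1$). Concretely: the quantity $N((k+1)\alpha_1 - 2k)$ from Lemma~\ref{lemma-1} has a controlled $2$-adic valuation — the term $(2k)^{k-1}(k-1)$ contributes valuation at least $k-1$ in $v_2$ (when $k$ is even this is large; when $k$ is odd one tracks $v_2(k-1)$), while the sum $\sum_{j=0}^{k-2}(k+1)^{k-j}(2k)^j$ has its $j=0$ term $(k+1)^k$ of valuation $0$ when $k$ is even — so one can pin down $v_2$ of the right-hand side modulo the unknown but integer-valued contribution $k\cdot v_2(r)$. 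Meanwhile $N(\alpha_1^{d+1}-1) = \pm\prod_{i=1}^k(\alpha_i^{d+1}-1) = \pm\, \mathrm{Res}(f_k(T), T^{d+1}-1)$ is an explicit integer depending only on $k$ and $d$, and one shows its $2$-adic valuation cannot be matched, because $k \mid (v_2(\text{RHS}) - v_2(N((k+1)\alpha_1-2k)))$ forces a congruence that fails. Alternatively, and perhaps more robustly, one can avoid the prime choice by noting that $r = \alpha^m/(\sqrt5\cdot\alpha_1^{n-1})\cdot((k+1)\alpha_1-2k)/(\alpha_1^{d+1}-1)$ being rational together with $\alpha$ a unit in $\Z[\alpha]$ forces, after clearing, a polynomial identity in $\alpha_1$ of degree $< k$ that contradicts $\deg_\Q \alpha_1 = k$ unless $d+1 < k$, and then the finitely many residual small cases $d+1 \in \{1,\dots,k-1\}$ are dispatched directly.

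The main obstacle I anticipate is the step where rationality of $\alpha^m/\sqrt5$ is used: $\alpha^m/\sqrt5$ is never rational for $m \geq 1$ (since $\alpha^m = F_m\alpha + F_{m-1}$ and $\sqrt5\notin\Q$), so in fact the field-theoretic argument already closes the case $m\geq 1$ outright once one knows $\sqrt5\notin\Q(\alpha_1)$ — and the real content of the lemma is precisely establishing $\sqrt5 \notin \Q(\alpha_1)$ for every $k \geq 3$ (for $m=0$ the expression $\alpha_1^{d+1}-1 = ((k+1)\alpha_1-2k)/\sqrt5\cdot\alpha_1^{1-n}$ is handled the same way). This is where I would expect to spend the most effort: showing $\Q(\sqrt5)\not\subseteq\Q(\alpha_1)$. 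The cleanest route is to show the discriminant of $f_k$ is not $5$ times a square, or — to sidestep a messy discriminant computation — to use a prime $p$ that is inert in $\Q(\sqrt5)$ (i.e. $p\equiv\pm2\Mod 5$, say $p=2$ or $p=3$) but for which $f_k(T)$ has a root mod $p$, so that $\Q(\alpha_1)$ has a prime of residue degree $1$ above $p$ while $\Q(\sqrt5)$ does not, precluding the inclusion. Verifying that such a $p$ exists uniformly in $k$ (for instance, checking $f_k(T) \bmod 2$ or $\bmod 3$ always has a root, or handling the two residue classes of $k$ separately) is the genuinely delicate point, and I would organize the proof so that everything else — the norm computation, the valuation bookkeeping — is routine once this field-theoretic fact is in hand.
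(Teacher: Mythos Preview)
Your reduction is sound: assuming the expression vanishes forces $\alpha^m/\sqrt5\in\Q(\alpha_1)$, and since $\Q(\alpha^m/\sqrt5)=\Q(\sqrt5)$ this gives $\Q(\sqrt5)\subseteq\Q(\alpha_1)$, hence $k$ even. You are also right that the whole lemma is equivalent to showing $\sqrt5\notin\Q(\alpha_1)$ for $k\geq3$. The gap is in how you propose to prove that last fact.

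Your $2$-adic sketch does not close: the leverage in the norm equation comes entirely from the factor $N(\sqrt5)=(-5)^{k/2}$, so the natural prime is $p=5$, not $p=2$. The paper does \emph{not} first prove $\sqrt5\notin\Q(\alpha_1)$ and then observe $\alpha^m/\sqrt5\notin\Q$. Instead it takes the norm $N=N_{\Q(\alpha_1)/\Q}$ of the full hypothetical equality. With $\Q(\sqrt5)\subseteq\Q(\alpha_1)$ one has $N(\alpha)=(-1)^{k/2}$, $N(\sqrt5)=(-5)^{k/2}$, $N(\alpha_1)=\pm1$, and since $N(\alpha_1^{d+1}-1)\in\Z$, the equation rearranges to
\[
5^{k/2}\ \big|\ N\bigl((k+1)\alpha_1-2k\bigr).
\]
Now Lemma~\ref{lemma-1} is invoked for exactly this purpose: it gives a closed form for that norm, and a direct $5$-adic computation (splitting into $k\not\equiv1\Mod5$, where one checks $v_5<2$ via periodicity mod $100$, and $k\equiv1\Mod5$, where one shows $v_5(\Delta(k))=v_5(k-1)\le\log_5(k-1)<k/2$) yields the contradiction. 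So the ``routine bookkeeping'' you anticipated after the field theory is in fact the entire content, once one picks the right prime.

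Your alternative route via prime splitting is reasonable in principle but does not work with $p=2$ or $p=3$ uniformly: for $k$ even one has $f_k(0)\equiv f_k(1)\equiv1\Mod2$, and mod $3$ one has $f_k(2)=1$ while $f_k(1)=1-k$, so $f_k$ has no root mod $2$ or $3$ whenever $k$ is even and $k\not\equiv1\Mod3$ (e.g.\ $k=6,8$). Producing, uniformly in $k$, a prime $p\equiv\pm2\Mod5$ with $f_k$ having a simple root mod $p$ would be a genuinely different proof, but you have not supplied one, and the paper's $5$-adic norm argument sidesteps this entirely.
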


\begin{proof} Let $\Q(\alpha_1)$ be the number field obtained from adjoining $\alpha_1$ to $\Q$. From a previous discussion, it is known that $[\Q(\alpha_1):\Q]=k$. If
\[ \frac{\alpha_1^{d+1} - 1 }{(k+1)\alpha_1-2k} \alpha_{1}^{n-1} = \frac{\alpha^m}{\sqrt{5}}, \]
then $\alpha^m/\sqrt{5} \in \mathbb{Q}(\alpha_1)$. 
Since $[\Q(\alpha):\Q]=2$, we have $\Q(\alpha) = \Q(\alpha^m/\sqrt{5})$ and thus $\Q(\alpha) \subseteq \Q(\alpha_1).$  In particular, $k$ is even.

Let $N$ stands for the norm map of $\Q(\alpha_1)/\Q$. The previous identity yields 
\[ \frac{N(\alpha_1^{d+1} - 1 )}{N((k+1)\alpha_1-2k)} N(\alpha_{1}^{n-1}) = \frac{N(\alpha^m)}{N(\sqrt{5})}. \]
Let $N_{\Q(\alpha)/\Q}$ be the norm map of $\Q(\alpha)/\Q$. Since $N = N_{\Q(\alpha)/\Q}^k$, then 
\[N(\alpha) = (-1)^{k/2},\quad N(\sqrt{5}) = (-5)^{k/2} \quad  \text{and} \quad  N(\alpha_1)=(-1)^{k+1}.\] 
Replacing these values in the previous identity yields 
\[ N((k+1)\alpha_1-2k) = (-1)^{k/2 + (k+1)(n-1)} (-5)^{k/2}
N(\alpha_1^{d+1}-1). 
\]
Next, since $(k+1)\alpha_1-2k$ and $\alpha_1^{d+1}-1$ are algebraic integers, then both
$N((k+1)\alpha_1-2k)$ and $N(\alpha_1^{d+1}-1)$ are integers numbers. Therefore, $5^{k/2}$ must divide $N((k+1)\alpha_1-2k).$ Moreover, since $k$ is even and $k \geq 3$, we conclude that  $N((k+1)\alpha_1-2k)$ is divisible by $25$.

Lemma \ref{lemma-1} and the formula for the partial sum of geometric series yields 
\[ N((k+1)\alpha_1-2k)= (-1)^{k} \Big( (2k)^{k-1}(k-1) - (k+1)^2 \frac{(2k)^{k-1} - (k+1)^{k-1}}{k-1} \Big).\]
Let 
\[ \Delta(k) := \frac{N((k+1)\alpha_1-2k)}{(-1)^{k}} =(2k)^{k-1}(k-1) - (k+1)^2 \frac{(2k)^{k-1} - (k+1)^{k-1}}{k-1}.\]

We observe that if $k \not\equiv 1 \pmod{5}$, then $\Delta(k) \not\equiv 0 \pmod{25}$ if, and only if, $\Delta(k)(k-1) \not\equiv 0 \pmod{25}.$ Moreover, $\Delta(k)(k-1) \equiv \Delta(k+100)(k+99) \pmod{25}$. Indeed,
\begin{align*}
    \Delta(k+100)(k+99) & \equiv (2k+200)^{k+99}(k+99)^2 -(k+101)^2\big( (2k+200)^{k+99}-(k+101)^{k+99}\big)  \\[0.025cm] 
& \equiv  (2k)^{k-1}(2k)^{100}(k-1)^2  - (k+1)^2\big((2k)^{k+99}-(k+1)^{k+99})  \\[0.025cm]
& \equiv (2k)^{k-1}(k-1)^2 - (k+1)^2\big((2k)^{k-1}-(k+1)^{k-1} \big) \\[0.025cm]
& = \Delta(k)(k-1)
\end{align*}
where the equivalences are taken modulo $25$ and we have applied Euler's theorem to get the third one. It is straightforward to verify that  $\Delta(k) \not\equiv 0 \pmod{25}$ for any $2 \le k \le 100$ such that $k \not \equiv 1\pmod{5}$. Therefore, for $k \not \equiv 1\pmod{5}$, we have a contradiction with the fact that $N((k+1)\alpha_1-2k)$ is divisible by $25$.

Let $k \equiv 1 \pmod{5}$.  We assert that in this case,  
\[ 
v_5(k-1)=v_5(N((k+1)\alpha_1-2k)),\]
where $v_5$ is the $5$-adic valuation. Let $k = 5^a b +1$, where $a := v_5(k-1)$. Then
\begin{align*}
    \Delta(k) &  =   2^{k-1} \; (5^a b +1)^{k-1} \; (5^a b)
 - (5^a b+2)^2 \; \frac{(2)^{k-1} \; (5^a b+1)^{k-1}-(5^a b+2)^{k-1}}{5^a b}\\[0.055cm]  
& = 2^{k-1} \; (5^a b +1)^{k-1} \; (5^a b) - (5^a b+2)^2 \; \frac{\sum_{j=0}^{k-1}{k-1 \choose j}\; 2^{k-1-j} \; (5^a b)^j \;( 2^j - 1 ) }{5^a b} \\[0.055cm]
& = 2^{k-1} \; (5^a b +1)^{k-1} \; (5^a b) -  (5^a b+ 2)^2 \; \frac{\sum_{j=1}^{k-1}{k-1 \choose j}\; 2^{k-1-j} \; (5^a b)^j \;( 2^j - 1 ) }{5^a b}.
\end{align*}
Therefore, 
\begin{align*}
     \frac{\Delta(k)}{5^a} & = 2^{k-1} \; (5^a b +1)^{k-1} \; b - (5^a b+ 2)^2 \frac{\sum_{j=1}^{k-1}{k-1 \choose j}\; 2^{k-1-j} \; (5^a b)^j \;( 2^j - 1 ) }{5^{2a} b} \\
& \equiv  2^{k+1} b  \pmod{5} \\
& \not\equiv 0 \pmod{5}
\end{align*}
hence showing that $v_5(k-1)=v_5(N((k+1)\alpha_1-2k)).$

Since $5^{k/2}$ divides $\Delta(k)$, by previous discussion, 
\[ \frac{k}{2} \leq v_5(\Delta(k)) = v_5(k-1) \leq \log_5 (k-1)<\frac{k}{2},
\]
which also gives us a contradiction. 
\end{proof}


Next, we present a classical and fundamental tool for establishing the finiteness of solutions of \eqref{eq1}: a lower bound for linear forms in logarithms \emph{à la Baker}.

\begin{lemma} \label{lemma-matveev} Let $\gamma_1, \ldots, \gamma_n \in \overline{\Q} \setminus \{0,1\}$ and $b_1, \ldots, b_n \in \Q$. If 
$ \gamma_{1}^{b_1}  \cdots  \gamma_{n}^{b_n} - 1 \neq 0,$
then 
\[ (eB)^{-\lambda} <   \big| \gamma_{1}^{b_1}  \cdots  \gamma_{n}^{b_n} - 1 \big|, \]
where $B := \max\{|b_1|, \ldots, |b_n|\}$ and $\lambda \in \R$ is an effectively computable constant depending only on $\alpha_1, \ldots, \alpha_n \in \overline{\Q} \setminus \{0,1\}.$  
\end{lemma}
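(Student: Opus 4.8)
The plan is to obtain this inequality as a direct consequence of Matveev's celebrated lower bound for linear forms in logarithms of algebraic numbers, which I would quote in the formulation recorded, for instance, by Bugeaud, Mignotte and Siksek. In its usual shape it reads: let $\mathbb{L}$ be a number field of degree $D$ over $\Q$, let $\eta_1,\dots,\eta_n\in\mathbb{L}^{\times}$, let $c_1,\dots,c_n\in\Z$, and suppose $\eta_{1}^{c_1}\cdots\eta_{n}^{c_n}\neq 1$; then
\[
\bigl|\eta_{1}^{c_1}\cdots\eta_{n}^{c_n}-1\bigr| \;>\; \exp\!\left(-1.4\cdot 30^{\,n+3}\,n^{4.5}\,D^{2}\,(1+\log D)\,(1+\log C)\,A_{1}\cdots A_{n}\right),
\]
where $C\geq\max\{|c_1|,\dots,|c_n|\}$ and, for each $i$, $A_i\geq\max\{D\,h(\eta_i),\,|\log\eta_i|,\,0.16\}$, with $h(\cdot)$ the logarithmic Weil height. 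Since $\gamma_1,\dots,\gamma_n$ are fixed once and for all, their degrees and heights are fixed, so the exponent on the right is an effectively computable quantity of the form $C_0(1+\log C)$ with $C_0$ depending only on $\gamma_1,\dots,\gamma_n$.

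Two routine adjustments are needed to match the statement of the lemma. First, the exponents here are rational rather than integral: writing $b_i=c_i/q$ over a common denominator $q\in\N$ and setting $\eta_i:=\gamma_i^{1/q}$ (any fixed $q$-th root), one has $\gamma_{1}^{b_1}\cdots\gamma_{n}^{b_n}=\eta_{1}^{c_1}\cdots\eta_{n}^{c_n}$ — in our applications every factor is real and positive, so no branch ambiguity arises — with $\eta_i\in\overline{\Q}\setminus\{0,1\}$ and $c_i\in\Z$. Because $h(\eta_i)=h(\gamma_i)/q$ and $D':=[\Q(\eta_1,\dots,\eta_n):\Q]$ is bounded solely in terms of the $\gamma_i$ and $q$, the constants $A_i$ and the degree entering Matveev's bound can be taken to depend only on $\gamma_1,\dots,\gamma_n$ (with $q$ a fixed integer, it is also harmless to let the final constant absorb $q$). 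I expect this bookkeeping — keeping the final constant genuinely independent of the data beyond the $\gamma_i$ — to be the only point requiring a little care; everything else is quotation. One also checks that $\eta_{1}^{c_1}\cdots\eta_{n}^{c_n}\neq 1$ is equivalent to the lemma's hypothesis $\gamma_{1}^{b_1}\cdots\gamma_{n}^{b_n}-1\neq 0$.

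Second, it remains to rewrite $\exp\!\left(-C_0(1+\log C)\right)$ in the form $(eB)^{-\lambda}$. Here $C=qB$ with $B=\max_i|b_i|\ge 1$ (which is the case in all the applications, where the $b_i$ are essentially the indices $m,n,d+1$), so $1+\log C=\log(eqB)\le\log(eB)+\log q$, and hence $C_0(1+\log C)\le \lambda\,\log(eB)$ for an effectively computable $\lambda$ depending only on $\gamma_1,\dots,\gamma_n$. Exponentiating gives
\[
\bigl|\gamma_{1}^{b_1}\cdots\gamma_{n}^{b_n}-1\bigr| \;>\; \exp\!\left(-\lambda\,\log(eB)\right) \;=\; (eB)^{-\lambda},
\]
which is precisely the asserted inequality.
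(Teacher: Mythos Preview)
The paper does not prove this lemma; it simply cites Baker and Matveev. Your derivation from Matveev's theorem is therefore in the same spirit as, and more explicit than, the paper's own treatment, and for integer exponents $b_i\in\Z$ your argument is correct.

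There is, however, a genuine gap in the rational-exponent case. You pass to $\eta_i=\gamma_i^{1/q}$ and note that the field degree and the heights then depend on $q$, writing that ``with $q$ a fixed integer, it is also harmless to let the final constant absorb $q$.'' But $q$ is the common denominator of the $b_i$ and is \emph{not} fixed, so the $\lambda$ you produce depends on the $b_i$, contrary to the statement. In fact, as literally stated (with $b_i\in\Q$ arbitrary and $\lambda$ depending only on the $\gamma_i$), the lemma is false: take $n=1$, $\gamma_1=2$, $b_1=1/q$; then $B=1/q$ gives $(eB)^{-\lambda}=(q/e)^{\lambda}\to\infty$ while $|2^{1/q}-1|\to 0$. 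This is really a defect of the paper's formulation rather than of your method --- the paper only ever applies the lemma with integer exponents, where $q=1$ and your reduction goes through cleanly. Your tacit assumption $B\ge 1$ in the final step is likewise justified only by the intended application.
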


\begin{proof}
    See \cite{baker-75} and \cite{matveev-00} for an explicit description of $\lambda\in \R$.
    \end{proof}


\begin{lemma} \label{lemma6} Let $k \in \N_{>1}$ and $\alpha_1 \in \R$  be the dominant root of $f_k(T)$. Then, 
\begin{enumerate}
  \item  $ \alpha_{1}^{n-2} \leq F_{n}^{(k)} \leq \alpha_{1}^{n-1}$, and  \smallskip
  
    \item $F_{n}^{(k)} \leq 2^{n-1}$,
\end{enumerate}
 where $F_{n}^{(k)} $ is the $n$-th term of the $k$-generalized Fibonacci sequence. 
\end{lemma}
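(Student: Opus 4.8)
The plan is to establish (1) by strong induction on $n$ and then read off (2) from the fact that $1<\alpha_1<2$. The only algebraic ingredient is the defining relation $f_k(\alpha_1)=0$, which I would use in the form $\sum_{i=0}^{k-1}\alpha_1^{i}=\alpha_1^{k}$ (equivalently $\alpha_1^{k}=1/(2-\alpha_1)$, a shape convenient for the base case).

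\emph{Base cases $1\le n\le k+1$.} From the extended initial conditions $F_{0}^{(k)}=\cdots=F_{2-k}^{(k)}=0$ and the recurrence $F_n^{(k)}=\sum_{i=1}^{k}F_{n-i}^{(k)}$, the terms of non‑positive index drop out and one gets $F_{1}^{(k)}=1$ and $F_{n}^{(k)}=\sum_{j=1}^{n-1}F_{j}^{(k)}$ for $2\le n\le k+1$, whence $F_{n}^{(k)}=2^{\,n-2}$ in that range by a one‑line induction. The lower bounds $\alpha_1^{\,n-2}\le F_{n}^{(k)}$ then follow immediately from $\alpha_1<2$. For the upper bounds one must check $2^{\,n-2}\le\alpha_1^{\,n-1}$ for $2\le n\le k+1$; rewriting this as $(2/\alpha_1)^{\,n-2}\le\alpha_1$ and using $2/\alpha_1>1$, the worst instance is $n=k+1$, i.e.\ $2^{\,k-1}\le\alpha_1^{\,k}$, which via $\alpha_1^{k}=1/(2-\alpha_1)$ is exactly the inequality $\alpha_1\ge 2(1-2^{-k})$ recalled in the preliminaries.

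\emph{Inductive step.} For $n\ge k+2$ all of $n-1,\dots,n-k$ are $\ge 2$, so by the induction hypothesis the bounds in (1) hold for each of them; then, using $F_{n}^{(k)}=\sum_{i=1}^{k}F_{n-i}^{(k)}$ and $\sum_{i=0}^{k-1}\alpha_1^{i}=\alpha_1^{k}$,
\[
F_{n}^{(k)}\le\sum_{i=1}^{k}\alpha_1^{\,n-i-1}
=\alpha_1^{\,n-1-k}\sum_{i=0}^{k-1}\alpha_1^{i}
=\alpha_1^{\,n-1-k}\,\alpha_1^{k}=\alpha_1^{\,n-1},
\]
and likewise $F_{n}^{(k)}\ge\sum_{i=1}^{k}\alpha_1^{\,n-i-2}=\alpha_1^{\,n-2-k}\,\alpha_1^{k}=\alpha_1^{\,n-2}$. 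This closes the induction and proves (1). Part (2) is then immediate: $F_{n}^{(k)}\le\alpha_1^{\,n-1}<2^{\,n-1}$ for $n\ge 2$ since $1<\alpha_1<2$, while $F_{1}^{(k)}=1=2^{0}$.

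No step here is genuinely hard — this is a textbook strong induction, and the estimate itself is classical. The two places that need a little care are: the base range $1\le n\le k+1$, where the tightest upper bound (at $n=k+1$) forces one to invoke the sharp lower estimate $\alpha_1>2(1-2^{-k})$ rather than merely $\alpha_1<2$; and the fact that the inductive step may only be applied from $n=k+2$ onward, so that every one of the $k$ predecessors has positive index and hence already satisfies the asserted bounds.
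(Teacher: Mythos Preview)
Your argument is correct. The induction is set up cleanly, the base range $1\le n\le k+1$ is handled via $F_n^{(k)}=2^{\,n-2}$ (for $n\ge 2$), and you correctly identify that the tight upper bound at $n=k+1$ reduces, through $\alpha_1^{k}=1/(2-\alpha_1)$, to the sharp estimate $\alpha_1>2(1-2^{-k})$; the inductive step is routine once that is in place. One cosmetic point: the case $n=1$ (where $F_1^{(k)}=1$ and the bounds read $\alpha_1^{-1}\le 1\le 1$) is not literally covered by the sentence ``follow immediately from $\alpha_1<2$''---it follows from $\alpha_1>1$---but this is trivial.

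As for comparison: the paper does not actually prove part (1); it simply cites \cite[Lemma~1]{bravo-luca-13} and then deduces (2) from $\alpha_1<2$ exactly as you do. Your write-up therefore supplies the self-contained inductive argument that the cited reference contains, so the route is the same one taken in the literature, just made explicit here rather than outsourced.
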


\begin{proof} The first item is \cite[Lemma 1]{bravo-luca-13}. The second one follows from the fact that \linebreak $\alpha_1 < 2$. 
    \end{proof}


\section{The main theorem}


In the following, we use the notation introduced in the previous section. We first consider the case $k=2$. 

\begin{proposition}\label{prop=casek=2} Let $d\in \N$ be fixed. Then 
\begin{equation} \label{eq-k=2}
    F_n + F_{n+1} + \cdots +F_{n+d} = F_m
\end{equation}
has solution if, and only if, either $d=0, 1$ or $d=2$ and $n=0$. 
\end{proposition}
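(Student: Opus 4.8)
## Proof Proposal for Proposition \ref{prop=casek=2}

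The starting point is to put the left-hand side of \eqref{eq-k=2} in closed form. Writing $F_i = F_{i+2}-F_{i+1}$ and telescoping, one gets
\[ F_n + F_{n+1} + \cdots + F_{n+d} = F_{n+d+2} - F_{n+1}, \]
so that \eqref{eq-k=2} is equivalent to $F_{n+d+2}-F_{n+1}=F_m$. (Equivalently, one may invoke the classical identity $\sum_{i=0}^{N}F_i = F_{N+2}-1$ and subtract the two partial sums.)

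The cases $d=0$ and $d=1$ are then immediate: they read $F_n=F_m$ and $F_{n+2}=F_m$ respectively, which are solved by $m=n$ and $m=n+2$ for every $n$. It remains to analyze $d\geq 2$, and here the plan is to trap $F_{n+d+2}-F_{n+1}$ strictly between two consecutive Fibonacci numbers. The inequality $F_{n+d+2}-F_{n+1} < F_{n+d+2}$ is clear, while $F_{n+d+1} < F_{n+d+2}-F_{n+1}$ is equivalent to $F_{n+1} < F_{n+d+2}-F_{n+d+1}=F_{n+d}$. Since the Fibonacci sequence is strictly increasing from index $2$ onward, this last inequality holds for all $n\geq 1$ (as $2\leq n+1<n+d$) and also for $n=0$ as soon as $d\geq 3$ (because $F_d\geq 2 > 1 = F_1$). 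As $n+d+1\geq 3$ and no Fibonacci number lies strictly between two consecutive Fibonacci numbers of index $\geq 2$, we conclude that $F_{n+d+2}-F_{n+1}$ is not a Fibonacci number whenever $d\geq 2$ and $(d,n)\neq(2,0)$. Finally, the lone excluded pair gives $F_0+F_1+F_2 = 2 = F_3$, an honest solution, which establishes the claimed characterization.

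The only subtle point — and the reason the case $d=2$, $n=0$ must be peeled off separately — is the failure of strict monotonicity at the very beginning of the sequence ($F_0=0<F_1=F_2=1<F_3$): this is exactly what allows $F_0+F_1+F_2$ to land on a Fibonacci value while every larger sum of $\geq 3$ consecutive terms overshoots $F_{n+d+1}$ without reaching $F_{n+d+2}$. Everything else reduces to routine monotonicity bookkeeping, so no deep input (Binet-type formulas, lower bounds for linear forms in logarithms) is needed for this proposition.
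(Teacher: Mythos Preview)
Your proof is correct and follows essentially the same approach as the paper: both reduce \eqref{eq-k=2} to $F_m = F_{n+d+2}-F_{n+1}$ via telescoping and then, for $d\ge 2$, trap this quantity strictly between the consecutive Fibonacci numbers $F_{n+d+1}$ and $F_{n+d+2}$. The only organizational difference is that the paper handles $n=0$ by a separate case analysis of $F_m = F_{d+2}-1$, whereas you fold $n=0,\ d\ge 3$ into the general trapping argument and isolate only the single pair $(d,n)=(2,0)$ --- a slightly cleaner bookkeeping of the same idea.
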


\begin{proof} 
As noticed before, if $k=2$, then $(F_{n}^{(2)})_n$ is simply $(F_n)_n$, the Fibonacci sequence. If $d=0,1$, the definition of $(F_n)_n$ and its recurrence, ensure that the identity \eqref{eq-k=2} is satisfied for all $n \in \N$, with $m=n,n+2$, respectively. Moreover, if $d=2$, then \eqref{eq-k=2} is satisfied for $n=0$ and $m=3$.  

On the other hand, given $d,n \in\N$, suppose 
\[ F_n + \cdots + F_{n+d} = F_m,\] 
for some $m\in \N$. We observe that $F_m = F_{n+d+2} - F_{n+1}$. Then, $F_m < F_{n+d+2}$, for all $d,n \in \N$. Moreover, if $d=1$, we are done, thus we might suppose $d>1$ in the following. 

Let $n=0$. Then $F_m = F_{d+2} -1$. If $F_m < F_{d+1}$, then $F_d \leq 0$, which implies $d=0$. If $F_m \geq F_{d+1}$, since $F_m < F_{d+2}$, then $F_m = F_{d+1}$. Hence, $F_d =1$, which implies $d=2$ and $m=3$. 

Let $n \geq 1$. Then $n+d > n+1 > 2$. Thus $F_{n+d} > F_{n+1}$ and
\[
  F_m :=  F_{n+d+2} - F_{n+1} > F_{n+d+2} - F_{n+d} = F_{n+d+1}.
   \]
Thereby, 
\[ F_{n+d+1} < F_m < F_{n+d+2},\] 
i.e., \  $F_m$ lies strictly between two consecutive Fibonacci numbers. Therefore, $F_m$ can not be a Fibonacci number for $d,n > 1$. 
    \end{proof}

The central result of this work is stated below. 

\begin{thm} \label{thm-main}   
Let $d, k\in \N$ be fixed, with $k\geq 3$. Then there are (at most) finitely many $m,n \in \Z$ such that 
\begin{equation}\tag{1.1} \label{eq-main}
    F_n^{(k)}+F_{n+1}^{(k)}+ \cdots +F_{n+d}^{(k)} = F_m
\end{equation} 
\end{thm}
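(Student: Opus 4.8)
The strategy is the standard Baker-type approach to exponential Diophantine equations, with Lemma~\ref{lemma-main} supplying the crucial non-degeneracy that lets us apply Lemma~\ref{lemma-matveev}. First I would use the Binet-type formula (Proposition~\ref{prop-binetfomulas}) to split $F_j^{(k)}$ into a dominant term $c_k\,\alpha_1^{j-1}$, where $c_k = (\alpha_1-1)/(2+(k+1)(\alpha_1-2)) = (\alpha_1-1)/((k+1)\alpha_1-2k)$, plus an error coming from the subdominant roots $\alpha_2,\dots,\alpha_k$, all of which have modulus $<1$. Summing over $j$ from $n$ to $n+d$ and using the geometric series identity $\sum_{j=0}^{d}\alpha_1^{j} = (\alpha_1^{d+1}-1)/(\alpha_1-1)$, the left-hand side of \eqref{eq-main} becomes
\[
\frac{\alpha_1^{d+1}-1}{(k+1)\alpha_1-2k}\,\alpha_1^{n-1} + E_1,
\]
where $E_1$ is bounded by a constant depending only on $d$ and $k$ (since the subdominant contributions form convergent geometric sums in $n$). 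On the right, the Binet formula for $F_m$ gives $F_m = \alpha^m/\sqrt5 - \beta^m/\sqrt5$ with $|\beta^m/\sqrt5| < 1$. So \eqref{eq-main} forces
\[
\left| \frac{\alpha_1^{d+1}-1}{(k+1)\alpha_1-2k}\,\alpha_1^{n-1} - \frac{\alpha^m}{\sqrt5} \right| < C_1 = C_1(d,k).
\]

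Next I would extract the relation between the sizes of $n$ and $m$. From Lemma~\ref{lemma6} (or directly from the displayed inequality above) one gets $\alpha_1^{n} \asymp F_m \asymp \alpha^m$ up to constants depending on $d,k$, hence $n$ and $m$ are linearly comparable: there are constants so that $m \le C_2 n + C_3$ and $n \le C_2' m + C_3'$. In particular we may take $B := \max\{n,m\}$ and know that $B$ is comparable to $n$. Then I divide the displayed inequality by $\alpha^m/\sqrt5$ to obtain
\[
\left| \frac{\sqrt5\,(\alpha_1^{d+1}-1)}{(k+1)\alpha_1-2k}\cdot \alpha_1^{n-1}\cdot \alpha^{-m} - 1 \right| < C_4\,\alpha^{-m} < C_4\,\alpha^{-m}.
\]
The quantity inside the absolute value is exactly a linear form in logarithms: it has the shape $\gamma_1^{b_1}\gamma_2^{b_2}\gamma_3^{b_3} - 1$ with $\gamma_1 = \sqrt5\,(\alpha_1^{d+1}-1)/((k+1)\alpha_1-2k)$ (a fixed nonzero algebraic number once $d,k$ are fixed), $\gamma_2 = \alpha_1$, $\gamma_3 = \alpha$, and exponents $b_1 = 1$, $b_2 = n-1$, $b_3 = -m$, so $B = \max\{1, |n-1|, m\}$ is comparable to $n$. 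Crucially, Lemma~\ref{lemma-main} guarantees this linear form is \emph{nonzero}, which is precisely the hypothesis needed to invoke Lemma~\ref{lemma-matveev}.

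Applying Lemma~\ref{lemma-matveev} yields a lower bound $(eB)^{-\lambda} < |\gamma_1^{b_1}\gamma_2^{b_2}\gamma_3^{b_3} - 1|$ with $\lambda = \lambda(d,k)$ effectively computable. Combining with the upper bound above gives
\[
(eB)^{-\lambda} < C_4\,\alpha^{-m},
\]
i.e. $\alpha^{m} < C_4\,(eB)^{\lambda}$, so $m < \lambda'\log B + C_5$ for suitable constants. Since $B$ is comparable to $n$ and $m$ is comparable to $n$, this says $n < \lambda''\log n + C_6$, which bounds $n$ (and hence $m$) by an effectively computable constant depending only on $d$ and $k$. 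Therefore \eqref{eq-main} has only finitely many solutions $(m,n)$, as claimed. The main obstacle — the only place anything subtle happens — is establishing that the linear form does not vanish identically, i.e. Lemma~\ref{lemma-main}; everything else is the routine bookkeeping of the Baker method (controlling the error terms $E_1$ and $\beta^m/\sqrt5$, checking the heights of the $\gamma_i$ are bounded in terms of $d,k$, and tracking the comparability of $n$, $m$, and $B$). One should also handle separately, by direct inspection, the finitely many small cases and the degenerate possibility that $n$ or $m$ is negative or that $(k+1)\alpha_1 - 2k$ has a sign making $c_k$ behave unexpectedly, but since $2(1-2^{-k}) < \alpha_1 < 2$ we have $(k+1)\alpha_1 - 2k \in (2 - (k+1)2^{-k}, 2) \subset (0,2)$ for $k \ge 3$, so $c_k > 0$ and no such degeneracy arises.
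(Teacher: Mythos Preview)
Your proposal is correct and follows essentially the same route as the paper: Binet-type expansion, geometric summation of the dominant term, Lemma~\ref{lemma-main} for non-vanishing, Lemma~\ref{lemma-matveev} for the lower bound, and Lemma~\ref{lemma6} for the comparability of $m$ and $n$. The only cosmetic differences are that the paper argues by contradiction via sequences $(x_\ell),(y_\ell)$ and tracks the decaying error $|\alpha_2|^{x_\ell+j-1}$ explicitly rather than absorbing it into a constant $C_1(d,k)$; your bound $C_4\alpha^{-m}$ after dividing is equivalent. (Minor arithmetic slip: the lower bound for $(k+1)\alpha_1-2k$ is $2-(k+1)2^{1-k}$, not $2-(k+1)2^{-k}$, but this is still positive for $k\ge3$, so your conclusion stands.)
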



\begin{proof}
Suppose that there are infinitely many integers $m,n \in \Z$ satisfying identity \eqref{eq-main}. Hence, there are sequences of positive integers $(x_{\ell})_{\ell}$ and $(y_{\ell})_{\ell}$ such that 
\[ F_{x_{\ell}}^{(k)}+F_{x_{\ell}+1}^{(k)}+ \cdots +F_{x_{\ell}+d}^{(k)} = F_{y_{\ell}} \]
for all $\ell \in \N.$ 
Applying Proposition \ref{prop-binetfomulas} in the previous identity yields
\[ \sum_{j=0}^{d} \sum_{i=1}^{k} \frac{\alpha_i -1}{2+(k+1)(\alpha_i-2)} \alpha_{i}^{x_{\ell}+j-1} = \frac{\alpha^{y_{\ell}} - \beta^{y_\ell}}{\sqrt{5}}.\]
Hence, 
\[  c_1\sum_{j=0}^{d} \alpha_{1}^{x_{\ell}+j-1} - \frac{\alpha^{y_{\ell}}}{\sqrt{5}} = 
- \sum_{j=0}^{d} \sum_{i=2}^{k} \frac{\alpha_i -1}{2+(k+1)(\alpha_i-2)} \alpha_{i}^{x_{\ell}+j-1} - \frac{\beta^{y_\ell}}{\sqrt{5}}, \]
where $c_1 := \frac{\alpha_1 -1}{2 + (k+1)(\alpha_1 -2)}.$ Thus, 
\begin{align*}
    \Big|c_1\sum_{j=0}^{d} \alpha_{1}^{x_{\ell}+j-1} - \frac{\alpha^{y_{\ell}}}{\sqrt{5}}\Big| & \leq 
    \sum_{j=0}^{d} \sum_{i=2}^{k} \Big|\frac{\alpha_i -1}{2+(k+1)(\alpha_i-2)} \alpha_{i}^{x_{\ell}+j-1} \Big| + \frac{\big|\beta\big|^{y_\ell}}{\sqrt{5}}\\
    & \leq c_2 \sum_{j=0}^{d} \big| \alpha_{2} \big|^{x_{\ell}+j-1} + \frac{\big|\beta\big|^{y_\ell}}{\sqrt{5}},
\end{align*}
where $c_2 := \sum_{i=2}^{k} \Big|\frac{\alpha_i -1}{2+(k+1)(\alpha_i-2)}\Big|$, and the last inequality is obtained from the fact that 
$|\alpha_2| \geq |\alpha_i|$, for $i=3, \ldots, k.$ 

Next, we observe that $\sum_{j=0}^{d} \alpha_{1}^{x_{\ell}+j-1}$ is the sum of the first $d+1$ terms in the geometric sequence given by the initial value $\alpha_1^{x_{\ell}-1}$ and common ratio $\alpha_1$. Then, 
\[\Big|c_1\sum_{j=0}^{d} \alpha_{1}^{x_{\ell}+j-1} - \frac{\alpha^{y_{\ell}}}{\sqrt{5}}\Big|  =
\Big| \frac{\alpha_{1}^{d+1}-1}{2+(k+1)(\alpha_1-2)} \alpha_{1}^{x_{\ell}-1} - \frac{\alpha^{y_{\ell}}}{\sqrt{5}}\Big|.
\]
Hence, 
\[ \Big|  c_3 \alpha_{1}^{x_{\ell}-1} - \frac{\alpha^{y_{\ell}}}{\sqrt{5}}\Big| \leq 
c_2 \sum_{j=0}^{d} \big| \alpha_{2} \big|^{x_{\ell}+j-1} + \frac{\big|\beta\big|^{y_\ell}}{\sqrt{5}},\]
where $c_3 := \frac{\alpha_{1}^{d+1}-1}{2+(k+1)(\alpha_1-2)}.$
Multiplying both sides of the above inequality by $\sqrt{5}/\alpha^{y_{\ell}}$ yields
\[ \Big|  c_3  \; \alpha_{1}^{x_{\ell}-1}  \alpha^{-y_{\ell}} \sqrt{5} - 1 \Big| \leq  
c_2  \; \alpha^{- y_{\ell}} \sqrt{5} \sum_{j=0}^{d} \big| \alpha_{2} \big|^{x_{\ell}+j-1} + \big|\beta\big|^{2 y_\ell}.\]
From Lemma \ref{lemma-main}, 
\begin{equation} \label{eq-forexample}
     c_3   \alpha_{1}^{x_{\ell}-1}  \alpha^{-y_{\ell}} \sqrt{5} - 1 \neq 0.
\end{equation}
Thus, Lemma \ref{lemma-matveev} yields the existence of $\lambda \in \R$ such that  
\[ (B e)^{-\lambda} \leq  c_2  \; \alpha^{- y_{\ell}} \sqrt{5} \sum_{j=0}^{d} \big| \alpha_{2} \big|^{x_{\ell}+j-1} + \big|\beta\big|^{2 y_\ell},\] 
 where $B := \max\{x_{\ell}-1, y_{\ell}, 1\}$. 
Since 
\[ F_{y_{\ell}} = \sum_{j=0}^{d} F_{x_{\ell}+j}^{(k)} \geq F_{x_{\ell}}^{(k)} \geq F_{x_{\ell}},\] 
then $y_{\ell} \geq x_{\ell}$ and hence $B = y_{\ell}.$ Follows from Lemma \ref{lemma6} that
\[  \alpha^{y_{\ell}-2} \leq F_{y_{\ell}} = \sum_{j=0}^{d} F_{x_{\ell}+j}^{(k)} \leq 
\sum_{j=0}^{d} 2^{x_{\ell}+j-1}  
< 2^{x_{\ell}+d} < \alpha^{2 x_{\ell}+2 d}.
\]
Thus, $y_{\ell} \leq 2 x_{\ell}+2 d + 2.$ Hence, 
\[((2 x_{\ell}+2 d + 2)e)^{-\lambda} \leq 
(y_{\ell}e)^{-\lambda} \leq 
c_2  \; \alpha^{- y_{\ell}} \sqrt{5} \sum_{j=0}^{d} \big| \alpha_{2} \big|^{x_{\ell}+j-1} + \big|\beta\big|^{2 y_\ell}.
\]
Multiplying the previous inequality by $(2 x_{\ell}+2 d + 2)^{\lambda}$ yields 
\[e^{-\lambda} \leq
(2 x_{\ell}+2 d + 2)^{\lambda} \left( c_2  \; \alpha^{- y_{\ell}} \sqrt{5} \sum_{j=0}^{d} \big| \alpha_{2} \big|^{x_{\ell}+j-1} + \big|\beta\big|^{2 y_\ell} \right).
\]
Since $|\alpha_2|< 1$ and $|\beta|< 1$, letting $n$ goes to infinity implies $e^{-\lambda} \leq 0$, which is a contradiction. This completes the proof.  \end{proof}


\begin{cor} Let $k \in \N$, $k \geq 3$. Then the intersection between the Fibonacci sequence and the $k$-generalized Fibonacci sequence is finite.   
\end{cor}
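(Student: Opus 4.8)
The plan is to derive the corollary as an immediate consequence of Theorem~\ref{thm-main} by setting $d=0$. First I would observe that the intersection $(F_m)_m \cap (F^{(k)}_n)_{n\geq -(k-2)}$ consists precisely of those integers $N$ that can be written simultaneously as $N = F_m$ for some $m$ and as $N = F^{(k)}_n$ for some $n$; in other words, $N$ is a value of the single-term sum $F^{(k)}_n + \cdots + F^{(k)}_{n+d}$ with $d = 0$. Since Theorem~\ref{thm-main} applies for every fixed $d, k \in \N$ with $k \geq 3$, in particular it applies with $d = 0$, and hence there are at most finitely many $n \in \N$ with $F^{(k)}_n \in (F_m)_m$.

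The one small gap to address is that Theorem~\ref{thm-main} is stated in terms of pairs $(m,n)$ rather than in terms of the set of common values. I would close this by noting that the Fibonacci sequence $(F_m)_m$ and the $k$-generalized Fibonacci sequence $(F^{(k)}_n)_n$ are each strictly increasing for indices beyond a bounded initial range (indeed $F^{(k)}_n \to \infty$ and $F_m \to \infty$, with each sequence taking any given positive value for only boundedly many indices), so finitely many solution pairs $(m,n)$ correspond to finitely many common values and vice versa. Thus finiteness of the solution set in Theorem~\ref{thm-main} with $d=0$ is equivalent to finiteness of the intersection.

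There is no real obstacle here: the work has already been done in Theorem~\ref{thm-main}, and the corollary is just the specialization $d = 0$ together with the trivial remark that a sequence tending to infinity hits each value only finitely often. If one wanted to be fully explicit, one could even invoke the stronger result of Marques recalled in the introduction, namely that for $d = 0$ the intersection is exactly $\{0,1,2,13\}$ when $k=3$ and $\{0,1,2,8\}$ when $k>3$; but for the stated qualitative claim, citing Theorem~\ref{thm-main} with $d=0$ suffices.
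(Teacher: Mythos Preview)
Your approach is correct and matches the paper's own proof, which simply sets $d=0$ in Theorem~\ref{thm-main} and notes that the result was already established by Marques. Your additional remark distinguishing finiteness of solution pairs $(m,n)$ from finiteness of common values is a reasonable bit of hygiene, but the paper does not bother with it since it is immediate from the eventual strict monotonicity of both sequences.
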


\begin{proof}
    Let $d=0$ in the previous theorem. We note that this corollary was previously established in \cite{marques-13}. 
\end{proof}

We end this article with a practical example for Theorem \ref{thm-main}. 

\begin{example} \label{ex-final} Consider the following Diophantine equation in $m,n \in \Z$, with $n \geq -1$ and $m \geq 0$: 
\begin{equation}\label{eq-ex}
    F_{n}^{(3)} + F_{n+1}^{(3)} = F_m.
\end{equation}
We claim that there are finitely many $(n,m)$ being a solution for \eqref{eq-ex}. Moreover, we conjecture that $(n,m)$ is a solution for \eqref{eq-ex} if, and only if, 
\[ (n,m) \in \big\{ (-1,0), (0,1), (0,2), (1,3), (2,4) \big\}.\]

In order to prove our claim, we follow the proof of Theorem \ref{thm-main} to obtain an upper bound for $n$ and $m$. 

In what follows, we use the notation of Lemma \ref{lemma-matveev}. Our first task is to explicitly 
 exhibit $\lambda \in \R$. In order to obtain such explicit description, we follow \cite{matveev-00}. Let $K$ be a degree $\ell$ number field containing  
$\gamma_1, \ldots, \gamma_k \in \R \cap \overline{\Q} \setminus \{0,1\}$. Then 
\[ \lambda := C_{k,\ell} \prod_{i=1}^k A_i,\]
where
\[ C_{k,\ell} := 1.4\cdot 30^{k+3} \cdot k^{4.5} \cdot \ell^2 (1+ \log(\ell))\]
and $A_1, \ldots, A_k \in \R$ are defined as follows. For $\gamma \in \overline{\Q}$ of degree $d$, let
\[ h(\gamma) :=  \frac{1}{d} \left( \log(a_0) + \sum_{i=1}^{d} \log(\max\{\gamma^{(i)},1\}) \right)\]
be the logarithmic height of $\gamma$, where $ p(T) := a_0 \prod_{i=1}^{d} (T - \gamma^{(i)}) \in \Z[T]$
is the minimal primitive polynomial of $\gamma$ having positive leading coefficient. Thus 
\[ A_i \geq \max\{\ell h(\gamma_i), |\log(\gamma_i)|, 0.16\}, \quad i=1,\ldots, k.\]
In our case, follows from \eqref{eq-forexample} in the proof of Theorem \ref{thm-main} that 
\[ c_3\; \alpha_{1}^{n-1} \alpha^{-m} \sqrt{5} -1 \neq 0\]
where $\alpha_1 \in \R$ is the dominant root of $(F_{n}^{(3)})_{n}$ and $c_3 := (\alpha_{1}^2-1)/(4 \alpha_1 -2).$ Thus, we might assume $K = \Q(\alpha_1, \sqrt{5})$ and, thereby, $\ell = 6$. 
Since $k=4$, $C_{4,6} = 1.57 \cdot 10^{15}.$ Moreover, as in the proof of Theorem \ref{thm-main}, $B = m$.  

Let us denote $\gamma_1  = c_3, \gamma_2 = \alpha_1, \gamma_3 = \alpha$ and $\gamma_4 = \sqrt{5}.$ We are left to calculate $A_1, A_2, A_3, A_4$.
The minimal primitive polynomial of $c_3$  with positive leading coefficient is given by
\[ 26 T^3 - 20 T^2 + 6T -1 = 26 (T-c_3) (T- z_1) (T- z_2) \in \Z[T],\]
with $|z_1| = |z_2| = 0.29$.  Thus $h(c_3) = 1.07$ and we might assume $A_1 = 6.42$. Since $\alpha_1$ is the dominant root of $(F_{n}^{(3)})_n$, thus $h(\alpha_1) = 0.2$ and, thereby, we might assume $A_2 = 1.22$. Analogously, $\alpha$ is the dominant root of $(F_m)_m$, thus $h(\alpha)=0.24$ and we might assume $A_3 = 1.44$. Finally, $h(\sqrt{5}) = 0.8$ and we might assume $A_4 = 4.83$.  Therefore, $\lambda = 85.53 \cdot 10^{15}.$

As in the end of the proof of Theorem \ref{thm-main},
\[ e^{-\lambda} \leq (2n+4)^{\lambda} \left( c_2 \alpha^{-m} \sqrt{5} (|\alpha_2|^{n-1} + |\alpha_2|^n) + 0.62^{2m} \right),\]
where $\alpha_2, \alpha_3 \in \C$ are the (conjugate) non-dominant roots of $(F_{n}^{(3)})_n$ and $c_2 = \sum_{i=2}^{3} \left| \frac{\alpha_i - 1}{4\alpha_i -2} \right|$. 
Since $m \geq n$ and $|\alpha_2| = |\alpha_3| < 0.74$, follows from the last inequality that 
\[ 1 < ((2n+4) e)^{\lambda} \left( 1.57 \alpha^{-n} (0.74^{n-1} + 0.74^n) + 0.62^{2n} \right).\]
We apply the logarithm and simplify the above inequality, as follows:
\begin{align*}
    0  & < \lambda (1 + \ln(2n+4)) + \ln\left( 1.57 \;\alpha^{-n} ( 0.74^{n-1} + 0.74^n) + 0.62^{2n} \right) \\
    & < \lambda (1 + \ln(2n+4)) + \ln\left( 3.14 \;\alpha^{-n}  0.74^{n-1} + 0.62^{2n} \right)\\
    & = \lambda (1.7 + \ln(n+2)) + \ln\left( 4.24 (0.46)^n + 0.38^{n} \right) \\
    & < 1.46 \cdot 10^{17} + 85.53 \cdot 10^{15} \ln(n+2) - n (0.78). 
\end{align*}
To summarize, if $n \in \N$ is such that \eqref{eq-ex} holds, then $n$ must verify the last inequality. 
We thus apply a Python script to determine that, for $n=9223372036854775808$, the abovementioned inequality does not hold. The script increases $n$ exponentially to optimize runtime until such a value is found. This approach speeds up the computation but does not guarantee that the identified $n$ is the smallest one for which the inequality fails. 

As in the proof of Theorem \ref{thm-main},  $m \leq 2 n +4.$ Therefore, the above discussion provides an upper bound for $m$ and $n$ as solutions of  \eqref{eq-ex}, yielding in finitely many solutions. 
\end{example}


\section*{Acknowledgement}
This is the first article resulting from the research project: \textit{Fibonacci Journey}. The project had its beginning through the research program \textit{Jornadas de Pesquisa em Matemática do ICMC 2023}, held at ICMC-USP. The authors are very grateful for the hospitality and support of ICMC-USP, as well as to the organizers of this remarkable project. This work was financed, in part, by the São Paulo Research Foundation (FAPESP), Brasil. Process Numbers 2013/07375-0 and 2022/09476-7.


\end{document}